\numberwithin{equation}{section}
\newtheorem{theorem}{Theorem}[section] %
\newtheorem{lemma}[theorem]{Lemma} %
\begin{document}
\title{On additive representation functions\footnote{The work is supported by the
National Natural Science Foundation of China, Grant No. 11771211
and a project funded by the Priority Academic Program Development
of Jiangsu Higher Education Institutions.}}

\author{  Yong-Gao Chen\footnote{ygchen@njnu.edu.cn(Y.-G. Chen), 1466810719@qq.com(H. Lv)}, Hui Lv\\
\small  School of Mathematical Sciences and Institute of Mathematics, \\
\small  Nanjing Normal University,  Nanjing  210023,  P. R. China
}
\date{}
\maketitle \baselineskip 18pt \maketitle \baselineskip 18pt

{\bf Abstract.}  Let $A$ be an infinite set of natural numbers.
For $n\in \mathbb{N}$, let $r(A, n)$ denote the number of
solutions of the equation $n=a+b$ with $a, b\in A, a\le b$. Let
$|A(x)|$ be the number of integers in $A$ which are less than or
equal to $x$. In this paper, we prove that, if $r(A, n)\not= 1$
for all sufficiently large integers $n$, then $|A(x)|> \frac 12
(\log x/\log\log x)^2$ for all sufficiently large $x$.

\vskip 3mm
 {\bf 2010 Mathematics Subject Classification:} 11B34

 {\bf Keywords and phrases:} Additive representation function;
 addition of sequence; counting function

\vskip 5mm

\section{Introduction}

 Let $\mathbb{N}$ be the set of all natural numbers and let $A$ be an infinite set of $\mathbb{N}$.
For $n\in \mathbb{N}$, let $r(A, n)$ denote the number of
solutions of the equation $n=a+b$ with $a, b\in A, a\le b$. Let
$A(x)$ be the set of integers in $A$ which are less than or equal
to $x$. In 1998,  Nicolas, Ruzsa and S\' ark\" ozy \cite{Ruzsa}
proved that there exists an infinite set $A$ of $\mathbb{N}$ and a
positive constant $c$ such that $r(A, n)\not= 1$ for all
sufficiently large integers $n$ and $|A(x)|\le c (\log x)^2$ for
all $x\ge 2$. In \cite{Ruzsa}, it was also proved that, if $A$ is
an infinite set of $\mathbb{N}$ such that $r(A, n)\not= 1$ for all
sufficiently large integers $n$, then
$$\limsup |A(x)| \left( \frac{\log\log x}{\log x}\right)^{3/2} \ge
\frac 1{20}.$$ In 2001, S\' andor \cite{Sandor} disproved a
conjecture of Erd\H os and Freud \cite{ErdosFreud} by constructing
an $A$ such that $r(A, n)\le 3$ for all $n$, but $r(A, n)=1$ holds
only for finitely many values of $n$. In 2004, Balasubramanian and
Prakash \cite{Balasubramanian} showed that there exists an
absolute constant $c>0$ with the following property: for any
infinite set $A$ of $\mathbb{N}$ such that $r(A, n)\not= 1$ for
all sufficiently large integers $n$, then
$$|A(x)|\ge  c \left( \frac{\log x}{\log\log x}\right)^{2}$$
for all sufficiently large  $x$. One can obtain $c=\frac 1{2904}$
from the proof of \cite{Balasubramanian}.

In this paper, the following result is proved.

\begin{theorem}\label{thm1} If $A$ is an infinite subset of $\mathbb{N}$ such that $r(A, n)\not= 1$
for all sufficiently large integers $n$, then $$|A(x)|> \frac 12
\left(\frac{\log x}{\log\log x}\right)^2$$ for all sufficiently
large $x$.
\end{theorem}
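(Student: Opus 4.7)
The plan is to argue by contradiction. Suppose for some arbitrarily large $x$ one has $|A(x)|\le \frac12(\log x/\log\log x)^2$; the goal is to produce an integer $n>N_0$ with $r(A,n)=1$, contradicting the hypothesis.

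Let $N_0$ be chosen so that $r(A,n)\neq 1$ for every $n>N_0$, and let $a_1=\min A$. The crucial local fact is: if $n>N_0$ and $n=a+b$ with $a,b\in A$, then there exists a different unordered pair $\{a',b'\}\neq\{a,b\}$ in $A$ with $a'+b'=n$. Applied with $a=a_1$ and $b\in A$ sufficiently large, this says every large $b\in A$ can be ``split'' as $b+a_1=a'+b'$ with $a_1<a'\le b'<b$, producing two strictly smaller elements of $A$.

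The core of the argument is then an iteration: starting from a large element $b\in A$ one repeatedly applies the splitting to build a branching tree of elements of $A$ and counts the distinct elements produced. To obtain the correct double-logarithmic shape, one simultaneously uses several small base elements $a_1<a_2<\cdots <a_m$ of $A$ (not just $a_1$), splitting $b+a_i$ in $m$ different ways to extract many fresh smaller elements at each stage. Taking $m$ of order $\log x/\log\log x$ and iterating the splitting for approximately the same number of levels, the tree produces, after accounting for overlaps, more than $\frac12(\log x/\log\log x)^2$ distinct elements of $A$ below $x$, which is the required contradiction.

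The main obstacle is quantitative rather than conceptual: pinning down the constant $\frac12$ demands careful bookkeeping to ensure that the auxiliary elements produced at different nodes of the tree are essentially distinct, together with an optimal balance between the width parameter $m$, the depth of the iteration, and the boundary corrections at the root and the leaves. This is the point at which the proof of Balasubramanian and Prakash loses a constant factor of several thousand, and the present refinement lies in tightening this combinatorial accounting so that the resulting lower bound matches, up to the factor $\frac12$, the shape of the upper-bound construction of Nicolas, Ruzsa and S\'ark\"ozy.
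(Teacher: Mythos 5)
Your proposal is a plan rather than a proof: it names the expected shape of the bound and gestures at a ``splitting tree,'' but every step that actually makes the argument work is missing. The first gap is the construction of the base elements. You say ``one simultaneously uses several small base elements $a_1<\cdots<a_m$,'' but you give no way to produce them or to guarantee there are $m\sim \log x/\log\log x$ of them. In the paper this is the content of Lemma \ref{lem2}: using the fact that every interval $(t,2t]$ meets $A$ (Lemma \ref{lem1}), one builds $a_1<\cdots<a_m$ and $b_1<\cdots<b_m$ with $b_{k+1}=a_k+b_k$, $a_k>3b_k$, the gap property $[a_k-b_k,a_k)\cap A=\emptyset$, and the density bound $|(b_k,a_k+b_k]\cap A|\ge \frac{a_k+b_k}{2b_k}-1$. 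The lower bound $m\ge \log x/\log\log x$ is then \emph{not} a choice one is free to make; it is forced by playing the multiplicative (AM--GM) consequence of the density bounds against the assumed upper bound on $|A(x)|$. Nothing in your sketch substitutes for this.

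The second and more serious gap is distinctness, which you dismiss as ``careful bookkeeping'' and ``accounting for overlaps.'' When you split $b+a_i=a'+b'$, the elements $a',b'$ may coincide with elements already produced at other nodes of your tree, and without a structural reason to the contrary the tree can collapse to $O(m)$ distinct elements rather than $\Omega(m^2)$. The paper's mechanism is precisely the gap property $[a_k-b_k,a_k)\cap A=\emptyset$ together with the super-geometric growth $a_{i+1}>3a_i$: these force the alternative representations of $a_i+a_k$ to land in $(b_k,a_k+b_k]$, and a graph-theoretic lemma (a graph with no nontrivial even closed walk has $|E(G)|\le|V(G)|$, Lemma \ref{lem3}) converts the absence of small signed relations $\sum x_i a_i=0$ with $x_i\in\{-2,\dots,2\}$ into the statement that the $\binom{k-1}{1}$ pairs $\{c_{i,k},d_{i,k}\}$ involve at least $k-1$ distinct new elements of $A$. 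This is the conceptual heart of the proof and the source of the constant $\frac12$ (via $|A(x)|\ge \frac12 m(m+1)$); it is exactly the point your proposal leaves blank.
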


The key points in this paper are Lemma \ref{lem2} and Lemma
\ref{lem3}. We believe that Lemma \ref{lem3} will be  useful in
the future in Graph Theory.

\section{Proofs}

In the following, we always assume that $A$ is an infinite subset
of $\mathbb{N}$ and $r(A, n)\not= 1$ for all $n\ge n_0$  and
$a_0\in A$ with $a_0\ge n_0$.

Firstly we give some lemmas.

\begin{lemma}(\cite[Lemma 1]{Balasubramanian})\label{lem1} For
every real number $t\ge a_0$, the interval $(t, 2t]$ contains an
element of the set $A$.
\end{lemma}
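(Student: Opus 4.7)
The plan is to argue by contradiction: suppose there exists some $t \ge a_0$ such that $A \cap (t, 2t] = \emptyset$, and derive a contradiction by exhibiting an $n \ge n_0$ with $r(A, n) = 1$.

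First I would pick two distinguished elements of $A$ to build a unique representation. Let $a_1 = \min A$ (which exists since $A$ is a nonempty subset of $\mathbb N$); note $a_1 \le a_0 \le t$. Let $a^* = \min\{a \in A : a > 2t\}$, which exists because $A$ is infinite and, by the contradiction hypothesis, contains no element in $(t, 2t]$. In particular there are no elements of $A$ lying in the interval $(t, a^*)$. Now set $n = a_1 + a^*$; since $a^* > 2t \ge 2a_0 \ge 2n_0$, we have $n > n_0$, so the standing hypothesis gives $r(A, n) \neq 1$. The pair $(a_1, a^*)$ with $a_1 \le a^*$ is a valid representation, so $r(A, n) \ge 1$, forcing $r(A, n) \ge 2$.

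Next I would show that in fact no second representation can exist. Suppose $n = b + c$ with $b, c \in A$, $b \le c$, and $\{b,c\} \neq \{a_1, a^*\}$. I split on the size of $c$ relative to $a^*$. If $c < a^*$, then since $A \cap (t, a^*) = \emptyset$, both $b$ and $c$ lie in $[1, t]$, which gives $b + c \le 2t < a^* < n$, a contradiction. If $c > a^*$, then $b = a_1 + a^* - c < a_1 = \min A$, so $b \notin A$, again a contradiction. Hence $r(A, n) = 1$, contradicting $r(A, n) \neq 1$, and the assumption $A \cap (t, 2t] = \emptyset$ must fail.

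The main point requiring care is choosing the witnesses so that the case analysis closes with no slack: $a_1$ has to be the absolute minimum of $A$ (so that no smaller element is available to play the role of $b$ when $c$ is pushed above $a^*$), and $a^*$ has to be the first element of $A$ past the alleged gap (so that elements strictly below $a^*$ are confined to $[1,t]$). With this setup, no ``routine calculation'' is needed; the obstacle was really to find the correct $n$ to test, and once $n = a_1 + a^*$ is chosen both cases collapse immediately.
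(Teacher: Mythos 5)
Your proof is correct. Note that the paper itself gives no proof of this lemma --- it is imported by citation from Balasubramanian--Prakash --- so there is nothing internal to compare against; your argument is a complete, self-contained verification in the same elementary spirit as the original (test $n=a_1+a^*$ with $a_1=\min A$ and $a^*$ the first element of $A$ beyond the alleged gap, and use $A\cap(t,a^*)=\emptyset$ to kill the case $c<a^*$ and minimality of $a_1$ to kill the case $c>a^*$). The only case you leave implicit, $c=a^*$, forces $b=a_1$ and hence reproduces the original representation, so it is harmless.
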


\begin{lemma}\label{lem2} If $x$ is a large
number with $$|A(x)|\le \left(\frac{\log x}{\log\log x}\right)^2$$
and $$a_0\le b\le \frac{x}{(\log x)^2},$$ then there exists $a\in
A$ with $a>3b$ and $a+b<x$  such that
$$[a-b, a)\cap A=\emptyset , \quad  |(b, a+b] \cap A|\ge
\frac{a+b}{2b}-1.$$
\end{lemma}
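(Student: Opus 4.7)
The plan is to take $a$ to be the smallest element of
\[
B=\{a'\in A: a'>3b,\ a'+b<x,\ [a'-b,a')\cap A=\emptyset\},
\]
and to verify directly that this $a$ satisfies the density bound. First I would show $B\ne\emptyset$. If not, every $a'\in A\cap(3b,x-b)$ would have an $A$-predecessor within distance $\le b$, so consecutive elements of $A\cap(3b,x-b)$ would differ by at most $b$. Combining this with Lemma~\ref{lem1} applied at $t=x/4$ (which places some element of $A$ in $(x/4,x/2]\subseteq(3b,x-b)$) and the fact that $\min A\cap(3b,x-b)\le 4b$, one gets $|A(x)|\ge x/(4b)-O(1)$, which exceeds $(\log x/\log\log x)^2$ for $b\le x/(\log x)^2$ once $x$ is large enough---contradicting the hypothesis.

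Now set $a=\min B$ and list $A\cap(3b,a)=\{y_1<\cdots<y_s\}$, possibly with $s=0$. By the minimality of $a$, no $y_i$ lies in $B$, so each $y_i$ has an $A$-element in $[y_i-b,y_i)$; hence $y_{i+1}-y_i\le b$, and $y_1$ admits an $A$-predecessor $y_0$ with $y_0\ge y_1-b$ and $y_0\le 3b$ (the latter forced by $y_1=\min A\cap(3b,a)$). Thus $y_0\in(2b,3b]$ and $y_1\le 4b$. Applying Lemma~\ref{lem1} at $t=y_s$ (when $s\ge 1$) and observing that $a$ is the next element of $A$ after $y_s$, one obtains $a\le 2y_s$, i.e.\ $y_s\ge a/2$.

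To count $|A\cap(b,a+b]|$, pick any $z_1\in A\cap(b,2b]$ supplied by Lemma~\ref{lem1}; the sets $\{z_1\}$, $\{y_0\}$, $\{y_1,\ldots,y_s\}$, $\{a\}$ are pairwise disjoint subsets of $(b,a+b]\cap A$, so $|A\cap(b,a+b]|\ge s+3$ when $s\ge 1$. The chain bound $s\ge(y_s-y_1)/b+1\ge a/(2b)-3$ then gives $s+3\ge a/(2b)\ge(a+b)/(2b)-1$; note that $a>6b$ already forces $s\ge 1$ via Lemma~\ref{lem1} on $(3b,6b]$. For the residual range $3b<a\le 6b$, where $s=0$ is possible, the target $(a+b)/(2b)-1\le 5/2$ requires at most $3$ elements, supplied by $z_1$, $a$, and a Lemma~\ref{lem1} element $z_2\in A\cap(2b,4b]$, with the sub-case $a\in(3b,4b]$ and $z_2=a$ only needing $\{z_1,a\}$. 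I expect the main technical nuisance to be precisely this small-$a$ bookkeeping; notably, the hypothesis $r(A,n)\ne 1$ enters only indirectly through Lemma~\ref{lem1}, and the substantive input is the combination of Lemma~\ref{lem1}'s dyadic density with the leader/non-leader structure enforced by choosing $a=\min B$.
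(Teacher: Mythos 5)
Your proposal is correct; I checked the existence argument for $B\ne\emptyset$, the chain bounds $y_{i+1}-y_i\le b$, $y_1\le 4b$, $y_s\ge a/2$, and the small-$a$ case analysis, and each step goes through. It is, however, organized differently from the paper's proof. The paper partitions $(b,\cdot]$ into blocks $(ib,(i+1)b]$ and uses a pigeonhole on $|A(x)|$ to locate the first empty block $((k+1)b,(k+2)b]$ with $k\le |A(x)|$; it then takes $a$ to be the least element of $A$ in $((k+1)b,2(k+1)b]$ (nonempty by Lemma \ref{lem1}), so that $[a-b,a)\cap A=\emptyset$ falls out of the empty block, $a+b\le 2(k+1)b+b<x$ follows from $k\le|A(x)|$, and the count $|(b,a+b]\cap A|\ge k+1\ge \frac{a+b}{2b}-1$ comes from the $k$ nonempty blocks plus $a$ itself. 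You instead define $a$ directly as the minimal element with the desired gap property, prove existence by a density contradiction (a chain with gaps $\le b$ reaching past $x/4$ would force $|A(x)|\ge x/(4b)-O(1)>(\log x/\log\log x)^2$), and count via the chain $z_1,y_0,y_1,\dots,y_s,a$. The paper's route buys brevity and uniformity --- the block decomposition handles existence, the gap condition, the bound $a+b<x$, and the count all at once, with no case split for small $a$ --- while your route is more self-contained in spirit (the element $a$ is characterized intrinsically rather than through an auxiliary index $k$) at the cost of the residual bookkeeping for $3b<a\le 6b$ and a separate verification that $a+b<x$, which you correctly build into the definition of $B$. Both arguments rest on the same two inputs: Lemma \ref{lem1}'s dyadic density and the observation that $A$ must be $b$-dense below the first gap of length $b$.
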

\begin{proof} By Lemma \ref{lem1}, $(b, 2b]\cap A\not=\emptyset $.
Since $$(|A(x)|+2) b\le \left(\frac{\log x}{\log\log
x}\right)^2\frac{x}{(\log x)^2} +2\frac{x}{(\log x)^2}<x, $$
$a_0\le b$ and $a_0\in A$, it follows that $$|(b, (|A(x)|+2) b]
\cap A|< |A(x)|.$$ So there exists an integer $1\le k\le |A(x)|$
such that
$$ (ib, (i+1)b]\cap A\not=\emptyset, \quad i=1,2,\dots , k$$
and  $((k+1)b, (k+2)b]\cap A=\emptyset$. By Lemma \ref{lem1},
$((k+1)b, 2(k+1)b]\cap A\not=\emptyset $. Now we take $a$ to be
the least integer in $((k+1)b, 2(k+1)b]\cap A$. Noting that
$((k+1)b, (k+2)b]\cap A=\emptyset$, we have $a>(k+2)b\ge 3b$ and
$(k+1)b<a-b<a$. It follows that $[a-b, a)\cap A=\emptyset$. It is
clear that
$$a+b\le 2(k+1)b +b\le 5kb\le 5|A(x)|b
\le 5 \left(\frac{\log x}{\log\log x}\right)^2 \frac{x}{(\log
x)^2} <x$$ and \begin{eqnarray*}|(b, a+b] \cap A|&=& \sum_{i=1}^k
| (ib, (i+1)b]\cap A|+|((k+1)b, a+b]|\\
&\ge& k+1=\frac{2(k+1)b+b}{2b}-\frac 12
>\frac{a+b}{2b}-1.\end{eqnarray*}
This completes the proof of Lemma \ref{lem2}.
\end{proof}

\begin{proof}[Proof of Theorem \ref{thm1}] We assume that $x$ is a large number. If
$$|A(x)|> \left(\frac{\log x}{\log\log
x}\right)^2,$$ then we are done. In the following, we assume that
\begin{equation}\label{eqnc2}|A(x)|\le \left(\frac{\log x}{\log\log
x}\right)^2.\end{equation} We will prove that
$$|A(x)|> \frac 12
\left(\frac{\log x}{\log\log x}\right)^2.$$

Let $b_1=a_0$. By Lemma \ref{lem2},  there exists $a_1\in A$ with
$a_1>3b_1$ and $a_1+b_1<x$  such that
$$[a_1-b_1, a_1)\cap A=\emptyset , \quad  |(b_1, a_1+b_1] \cap A|\ge
\frac{a_1+b_1}{2b_1}-1.$$ Let $b_2=a_1+b_1$. Continuing this
procedure, we obtain two sequences $b_1<b_2<\cdots <b_m$ and
$a_1<a_2<\cdots < a_{m}$ with  $a_k>3b_k$, $a_k+b_k<x$ $(1\le k\le
m)$ and $b_k=a_{k-1}+b_{k-1}$ $(2\le k\le m)$ such that
$$[a_k-b_k, a_k)\cap A=\emptyset , \quad  |(b_k, a_k+b_k] \cap A|\ge
\frac{a_k+b_k}{2b_k}-1,\quad  k=1,2, \dots , m,$$ where
$$a_m+b_m>\frac{x}{(\log x)^2}, \quad  b_m=a_{m-1}+b_{m-1}\le \frac{x}{(\log x)^2}.$$
For any $1\le i<j\le m$, by $r(A, a_i+a_j)\not= 1$ we may choose
one pair $c_{i,j}, d_{i,j}\in A$ with $d_{i,j}\not= a_j$ and
$c_{i,j}\le d_{i,j}$ such that
$$a_i+a_j=c_{i,j}+d_{i,j}.$$
Let
$$S_k=\{ c_{i,k} \mid  i<k, d_{i,k}<a_k \} \cup \{ d_{i,k} \mid  i<k, d_{i,k}<a_k
\} ,$$ $$ M_k= \{ i \mid  i<k, d_{i,k}<a_k \} ,$$
$$T_k=\{  d_{i,k} \mid  i<k, d_{i,k}>a_k
\} ,$$ and $$  N_k= \{ i \mid  i<k, d_{i,k}>a_k \} .$$ We will
prove that \begin{equation}\label{eq1}S_k\subseteq A\cap (b_k,
a_k), \quad |S_k|\ge |M_k|\end{equation} and
\begin{equation}\label{eq2}T_k\subseteq A\cap (a_k, a_k+b_k], \quad |T_k|= |N_k|.\end{equation}

For $k=1$, we have $S_k=T_k=\emptyset $ and $M_k=N_k=\emptyset $.
So \eqref{eq1} and \eqref{eq2} hold for $k=1$. Now we assume that
$k\ge 2$.

It is clear that
$$d_{i,k}=a_i+a_k-c_{i,k}\le a_i+a_k\le a_{k-1}+a_k\le b_k+a_k. $$
This implies that $T_k\subseteq A\cap (a_k, a_k+b_k]$. If
$d_{u,k}=d_{v,k} \in T_k$ for some pairs $1\le u<v<k$, then, by
$$a_u+a_k=c_{u,k}+d_{u,k}, \quad a_v+a_k=c_{v,k}+d_{v,k},$$
we have
\begin{eqnarray*} a_v&=&c_{v,k}+d_{v,k} -a_k>c_{v,k}\ge c_{v,k}-c_{u,k}=a_v-a_u\\
&\ge & a_v-a_{v-1}\ge a_v-a_{v-1}-b_{v-1}=a_v-b_v.\end{eqnarray*}
This contradicts $[a_v-b_v, a_v)\cap A=\emptyset $. Thus, if
$d_{u,k}, d_{v,k} \in T_k$ with $1\le u<v<k$, then $d_{u,k}\not=
d_{v,k}$. Hence $|T_k|= |N_k|$. Thus we have proved that
\eqref{eq2} holds.

If $i<k$ and $d_{i,k}<a_k$, then by $[a_k-b_k, a_k)\cap
A=\emptyset$ we have $d_{i,k}<a_k-b_k$. Thus
$$c_{i,k}=a_i+a_k-d_{i,k}>a_k-(a_k-b_k)=b_k.$$
It follows that $S_k\subseteq (b_k, a_k)\cap A$.

To prove $|S_k|\ge |M_k|$, it is convenient to use the language
from graph theory.

\medspace

A graph $G$ consists of two parts: $V=V(G)$ of its vertices and
$E=E(G)$ of its edges, where $E(G)$ is a subset of $\{ \{ u, v\}
\mid u, v\in V \} $. Here we allow $G$ contains loops (i.e. $\{ v,
v\} \in E(G)$) and $G$ is a undirected graph. A nontrivial closed
walk is an alternating sequences of vertices and edges $v_1, e_1,
v_2, \dots , v_{n-1}, e_{n-1}, v_n, e_n, v_1$ such that at least
one of edges appears exactly one time and each edge repeats at
most two times. Furthermore, if $n$ is even, then the nontrivial
closed walk is called a nontrivial even closed walk, otherwise, a
nontrivial odd closed walk. A nontrivial closed walk $v_1, e_1,
v_2, \dots , v_{n-1}, e_{n-1}, v_n, e_n, v_1$ is called a closed
trail if $v_1, v_2, \dots , v_n$ are distinct. Furthermore, if $n$
is even, then the  closed trail is called an even closed trail,
otherwise, an odd closed trail. In these definitions, we allow
$n=1$.

\begin{lemma}\label{lem3}  If a graph $G$ has no nontrivial even closed walk, then
$$|E(G)|\le |V(G)|.$$\end{lemma}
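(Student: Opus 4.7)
The plan is to prove the contrapositive: if $G$ is a graph with $|E(G)| > |V(G)|$, then $G$ contains a nontrivial even closed walk. Edge and vertex counts add across connected components and a closed walk lives in a single component, so it suffices to argue with $G$ connected and $|E(G)| \ge |V(G)|+1$. A first reduction: if $G$ has any cycle of even length, then traversing that cycle once is already a nontrivial even closed walk (each edge used exactly once, length even). Hence I may further assume that every cycle of $G$ is odd.

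Fix a spanning tree $T$ of $G$. Since $|E(G)|-|E(T)| \ge 2$, pick two distinct non-tree edges $e$ and $f$; the corresponding fundamental cycles $C_e$ and $C_f$ are odd by the previous reduction. The key object is the edge-set symmetric difference $C_e \triangle C_f$: every vertex has even degree in this subgraph, so it decomposes into edge-disjoint cycles, and its only non-tree edges are $e$ and $f$ themselves (tree edges lying in both $C_e$ and $C_f$ cancel). Because no cycle of $G$ sits inside the spanning tree, every cycle in the decomposition uses at least one non-tree edge, which forces the decomposition to contain at most two cycles.

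If the decomposition yields a single cycle, its length equals $|C_e|+|C_f|-2|E(C_e)\cap E(C_f)|$, which is even (odd plus odd minus an even integer), contradicting the assumption that every cycle of $G$ is odd. Otherwise the decomposition gives two edge-disjoint odd cycles $C'$ and $C''$, both lying in the connected graph $G$. I now combine them into a nontrivial even closed walk. If $C'$ and $C''$ share a vertex $v$, traversing $C'$ from $v$ back to $v$ followed by $C''$ from $v$ back to $v$ yields a closed walk of even length $|C'|+|C''|$ in which every edge appears exactly once. If $C'$ and $C''$ are vertex-disjoint, take a path $P$ in $G$ joining them and traverse $C'$, then $P$, then $C''$, then $P$ in reverse; this closed walk has even length $|C'|+|C''|+2|P|$, with each cycle edge used once and each path edge used twice. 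In both cases both nontriviality conditions are met (at least one edge used exactly once, no edge used more than twice), giving the required contradiction.

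The main obstacle is justifying the structural dichotomy that $C_e\triangle C_f$ decomposes into at most two cycles and then verifying the parity arithmetic in each case. The admitted case $n=1$ of the definition is handled uniformly by treating a loop at $v$ as an odd cycle of length one: if $e$ is a loop, then $C_e=\{e\}$, and the arithmetic above still applies, as does the concatenation argument (the loop is traversed as the trivial walk $v,e,v$ and then joined to $C''$ exactly as before).
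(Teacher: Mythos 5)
Your proof is correct in substance and reaches the conclusion by a genuinely different organization than the paper's, although the decisive combinatorial step is the same. The paper argues directly: it first shows that the absence of nontrivial even closed walks forces a connected $G$ to have no even closed trail and at most one odd closed trail --- two distinct odd closed trails, whether sharing a vertex or joined by a shortest connecting walk, concatenate into a forbidden even closed walk --- and then deduces $|E(G)|\le |V(G)|$ by minimizing $\mu(H)=|E(H)|-|V(H)|$ over connected subgraphs. You instead take the contrapositive and make the existence of the two relevant cycles explicit via a spanning tree: two distinct non-tree edges give two fundamental cycles, and the symmetric difference $C_e\triangle C_f$ is either a single even cycle or splits into two edge-disjoint odd cycles, which you then concatenate exactly as the paper does. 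Your route is more concrete at the step the paper treats most briefly (reducing to ``at most one cycle''), and it handles loops and the $n=1$ case cleanly; what it shares with the paper is the key observation that two odd cycles joined at a vertex or by a path yield a nontrivial even closed walk.

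One point needs tightening: in the vertex-disjoint case you take ``a path $P$ in $G$ joining'' $C'$ and $C''$. If $P$ happens to use an edge of $C'$ or $C''$, that edge appears three times in the walk $C'\,P\,C''\,P^{-1}$ (once on the cycle, twice on the path), violating the requirement that each edge repeat at most twice. Choose $P$ to be a shortest path between $V(C')$ and $V(C'')$ --- as the paper does with its $W_0$ --- so that $P$ meets the two cycles only in its endpoints and is edge-disjoint from both; then the count ``each cycle edge once, each path edge twice'' is valid and the walk is a legitimate nontrivial even closed walk.
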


\begin{proof} It is enough to prove the lemma when $G$ is connected.
Since $G$ has no nontrivial even closed walk, it follows that $G$
has no even closed trail.

Suppose that $K$ and $L$ were  two distinct  odd closed trails of
$G$.

If $K$ and $L$ have at least one common vertex $v$, then $K$ and
$L$ can be written as
$$K: v, e, u_1, e_1, \dots , u_m, e_m, v$$
and
$$L: v, e', v_1, e_1', \dots , v_n, e_n', v.$$
Thus
$$K\cup L : v, e, u_1, e_1, \dots , u_m, e_m, v, e', v_1, e_1', \dots , v_n, e_n', v$$
is a nontrivial even closed walk of $G$, a contradiction.

If $K$ and $L$ have no common vertex, then there is a walk $W$
which connects $K$ and $L$ since $G$ is connected. Let $W_0$ be
the shortest walk which connects $K$ and $L$.  Now $K$, $L$ and
$W_0$ can be written as
$$K: u,e,u_1, e_1, \dots , u_m,  e_m, u,$$
$$L: v,e',v_1, e_1', \dots , v_n, e_n', v$$
and
$$W_0: u,e'',w_1, e_1'', \dots , w_t, e_t'', v.$$
Thus
$$K\cup W_0 \cup L \cup W_0:u,e,\dots ,e_m, u,e'', \dots e_t'', v,e', \dots , e_n', v,e_t'',\dots,e'',u$$
is a nontrivial even closed walk of $G$, a contradiction.

Now we have proved that $G$ has at most one   odd closed trail
(includes loops). For any subgraph $H$ of $G$, let $\mu
(H)=|E(H)|-|V(H)|$. Let $H_1$ be a connected subgraph of $G$ with
the least $|V(H_1)|$ such that $\mu (H_1)=\mu (G)$. Since $G$ has
at most one  odd closed trail, it follows that $H_1$ has at most
one  odd closed trail. Thus $H_1$ contains only one vertex or
$H_1$ is an  odd closed trail.  So $\mu (H_1)=-1$ or $0$. That is,
$\mu (G)=-1$ or $0$. Therefore, $|E(G)|\le |V(G)|$. This completes
the proof of Lemma \ref{lem3}.
\end{proof}

\medspace

Now we return to the proof of Theorem \ref{thm1}. If
$S_k=\emptyset $, then $M_k=\emptyset $. In this case,
$|S_k|=|M_k|$. Now we assume that $S_k\not= \emptyset $ and define
a graph $G_k$ such that $V(G_k)=S_k$ and
$$E(G_k)=\{ \{ c_{i,k}, d_{i,k}\} \mid i<k, d_{i,k}<a_k \} .$$ Now
we show that $G_k$ has no nontrivial even closed walk.

Suppose that $G_k$ has a nontrivial even closed walk:$$v_1, e_1,
v_2, \dots , v_{2n-1}, e_{2n-1}, v_{2n}, e_{2n}, v_1.$$ Since $\{
v_i, v_{i+1} \} \in E(G_k)$, there exists $\ell_i <k $ such that
$$v_i +v_{i+1} = a_{\ell_i} +a_k,$$
where $v_{2n+1}=v_1$. Thus
$$\sum_{i=1}^{2n} (-1)^i (a_{\ell_i} +a_k)=\sum_{i=1}^{2n} (-1)^i (v_i
+v_{i+1}) =0.$$ It follows that
$$\sum_{i=1}^{2n} (-1)^i a_{\ell_i}=0.$$
We rewrite this as
$$\sum_{i=1}^{k-1} x_i a_{i} =0.$$
Since at least one of edges appears exactly one time and each edge
repeats at most two times in $e_1, e_2, \dots , e_{2n}$, it
follows that $x_i\in \{ -2, -1, 0, 1, 2\}$ $(1\le i\le k-1)$ and
at least one of $x_i$ is nonzero. Let $j$ be the largest index
such that $x_j\not= 0$. Noting that
$$a_{i+1}>3b_{i+1}=3(a_i+b_i)>3a_i,$$ we have
$$ a_j\le |x_ja_j|=\left| -\sum_{i=1}^{j-1} x_i a_{i}\right| \le 2\sum_{i=1}^{j-1}
a_{i}<2 (\frac 13 +\frac 1{3^2} + \cdots ) a_j =a_j,$$ a
contradiction. Hence $G_k$ has no nontrivial even closed walk. By
Lemma \ref{lem3}, we have $$|M_k|=|E(G_k)|\le |V(G_k)|=|S_k|.$$
Thus we have proved that \eqref{eq1} holds. By \eqref{eq1} and
\eqref{eq2}, we have
\begin{eqnarray*}|A\cap (b_k, a_k+b_k]|&\ge & |A\cap (b_k, a_k)|+|A\cap (a_k, a_k+b_k]|+|\{ a_k\} |\\
&\ge & |S_k|+|T_k|+1\\
&\ge & |M_k|+|N_k|+1=k.\end{eqnarray*} Noting that
$b_{k+1}=a_k+b_k$ for $k=1, 2,\dots, m-1$ and $a_m+b_m<x$, we have
\begin{eqnarray*}|A(x)|&\ge&\sum_{k=1}^{m-1} |A\cap (b_k,
b_{k+1}]| +|A\cap (b_m, a_m+b_m]|\\
&=& \sum_{k=1}^m|A\cap (b_k, a_k+b_k]|\\
&\ge & 1+2+\cdots +m\\
&=& \frac 12 m(m+1).\end{eqnarray*} On the other hand,
\begin{eqnarray*}|A(x)|&\ge & \sum_{k=1}^m
|(b_k, a_k+b_k] \cap A|\\
& \ge & \sum_{k=1}^m \left( \frac{a_k+b_k}{2b_k}-1\right) \\
&=& \sum_{k=1}^{m-1} \frac{b_{k+1}}{2b_k} +\frac{a_m+b_m}{2b_m}-m\\
&\ge & \sum_{k=1}^{m-1} \frac{b_{k+1}}{2b_k} +\frac{x}{2b_m (\log x)^2}-m\\
&\ge & m \left( \frac{x}{2b_m (\log x)^2} \prod_{k=1}^{m-1}
\frac{b_{k+1}}{2b_k}\right)^{1/m}-m\\
&=&\frac 12 m \left( \frac{x}{b_1(\log x)^2} \right)^{1/m}-m.
\end{eqnarray*}
If $$m<\frac 14 \frac{\log x}{\log\log x},$$ then
\begin{eqnarray*}|A(x)|&\ge& \frac 12 m \left( \frac{x}{b_1(\log x)^2} \right)^{1/m}-m \\
&\ge & \frac 12 e^{(\log x -2\log\log x -\log b_1)/m} -m \\
&>&\frac 12 e^{3\log\log x} - \frac 14 \frac{\log x}{\log\log
x}\\
&=&\frac 12 (\log x)^3 - \frac 14 \frac{\log x}{\log\log x}\\
&>&(\log x)^2,\end{eqnarray*} a contradiction with \eqref{eqnc2}.
So
$$m\ge \frac 14 \frac{\log x}{\log\log x}. $$
If $$m< \frac{\log x}{\log\log x} ,$$ then
\begin{eqnarray*}|A(x)|&\ge& \frac 12 m \left( \frac{x}{b_1(\log x)^2} \right)^{1/m}-m \\
&\ge & \frac 18 \frac{\log x}{\log\log x}  \exp \left(\frac{(\log x -2\log\log x -\log b_1)\log\log x}{\log x}\right) -m \\
&= & \frac 18 \frac{\log x}{\log\log x}  \exp \left( \log\log x + \frac{(-2\log\log x -\log b_1)\log\log x}{\log x}\right) -m \\
&=& \frac 18 \frac{(\log x)^2}{\log\log x} (1+o(1))\\
&>&\left(\frac{\log x}{\log\log x}\right)^2,\end{eqnarray*} a
contradiction with \eqref{eqnc2}. Hence
$$m\ge \frac{\log x}{\log\log x}. $$
Therefore,
$$|A(x)|\ge \frac 12 m(m+1)> \frac 12 \left(\frac{\log x}{\log\log
x}\right)^2.$$ This completes the proof.\end{proof}

\end{document}